\theoremstyle{plain}
\newtheorem{theorem}{Теорема}
\newtheorem{definition}{Определение}
\theoremstyle{definition}
\newtheorem{remark}{Замечание}
\newtheorem{example}{Пример}
\newtheorem{task}{Задача}
\numberwithin{theorem}{section}
\numberwithin{lemma}{section}
\numberwithin{proposition}{section}
\numberwithin{corollary}{section}
\numberwithin{remark}{section}
\numberwithin{definition}{section}
\numberwithin{example}{section}
\newcommand\br[1]{\left(#1\right)}
\newcommand\mbr[1]{\left|#1\right|}
\newcommand\abr[1]{\left\langle#1\right\rangle}
\newcommand\nbr[1]{\left\|#1\right\|}
\newcommand\cebr[1]{\left\lceil#1\right\rceil}
\title{АДАПТАЦИЯ К ВЕЛИЧИНАМ ПОГРЕШНОСТЕЙ ДЛЯ НЕКОТОРЫХ МЕТОДОВ ОПТИМИЗАЦИИ ГРАДИЕНТНОГО ТИПА\footnote{Работа выполнена при поддержке Российского научного фонда, проект 18-71-00048.}}
\author{Ф.\,С.\,Стонякин}
\date{}
\begin{document}

\maketitle

\section{Введение}

Хорошо известно, что методы градиентного типа отличаются относительной простотой и малыми затратами памяти, что объясняет  их популярность в работах по многомерной оптимизации (см., например, \cite{ Necoara_2019, s2, inexact_model_2019, s3, s1, DevolderThesis, s5, Nesterov_2013, Karimi}).
 Напомним, что для вывода оценок скорости сходимости градиентного метода можно использовать идею аппроксимации функции в исходной точке (текущем положении метода) мажорирующим ее параболоидом вращения. Так, для задачи минимизации выпуклого функционала $f:Q\rightarrow\mathbb{R}$, заданного на выпуклом замкнутом множестве $Q \subset \mathbb{R}^n$ с липшицевым градиентом (для некоторой константы $L>0$ при произвольных $x,y\in Q$ верно $\nbr{\nabla f(x)-\nabla f(y)}\leqslant L\nbr{x-y}$), выполняются неравенства
\begin{equation}\label{eq01}
f(x)+\abr{\nabla f(x),y-x}\leqslant f(y)\leqslant f(x)+\abr{\nabla f(x),y-x}+\frac{L}{2}\nbr{y-x}^2.
\end{equation}
Неравенства \eqref{eq01} позволяют получить для обычного градиентного спуска с постоянным шагом оценку скорости сходимости
\begin{equation}\label{eq02}
f(\hat{x})-f^*\leqslant\frac{C_1}{N},
\end{equation}
где $\hat{x}$~--- выход работы метода после $N$ итераций, $f^*$~--- точное значение искомого минимума функции $f$, $C_1$~--- некоторая положительная константа.

В новых работах, посвященных методам градиентного типа, например в~\cite{s3}, введено условие относительной гладкости оптимизируемого функционала, предполагающее замену правого неравенства в \eqref{eq01} на ослабленный вариант
\begin{equation}\label{eq01new}
f(y)\leqslant f(x)+\abr{\nabla f(x),y-x}+LV(y, x),
\end{equation}
где $V(y, x)$ --- широко используемый в оптимизации аналог расстояния между точками $x$ и $y$, который называют \emph{расхождением Брэгмана}. Обычно \emph{расхождение Брэгмана} вводится с использованием вспомогательной 1-сильно выпуклой функции $d$ (порождает расстояния), которая дифференцируема во всех точках $x\in Q$,
\begin{equation}\label{3}
V(y,x)=d(y)-d(x)-\langle \nabla d(x),y-x\rangle \quad \forall x,y\in Q;
\end{equation}
 здесь $\langle \cdot,\cdot\rangle$ --- скалярное произведение в $\mathbb{R}^n$, причем ввиду 1-сильной выпуклости функции~$d$  для произвольных $x, y \in Q$ верно неравенство $V(y,x) \geqslant 1/2\, \|y - x\|^2$. В частности, для стандартной евклидовой нормы $\|\cdot\|_2$ и соответствующего расстояния в $\mathbb{R}^n$ можно считать, что $V(y,x) = d(y-x) = 1/2\, \|y - x\|_2^2$ для произвольных $ x, y \in Q$. Однако рассмотренное в \cite{s3} условие относительной гладкости предполагает лишь выпуклость (но не сильную выпуклость) порождающей функции $d$. Как показано в \cite{s3}, концепция относительной гладкости позволяет применить вариант градиентного метода для некоторых задач, к которым ранее применялись лишь методы внутренней точки.

Весьма естественно возникает вопрос влияния на скорость сходимости метода погрешностей задания целевой функции и/или градиента. В этом плане можно отметить хорошо известную концепцию неточного оракула О.\,Деволдера --- Ф.\,Глинера --- Ю.\,Е.\,Нестерова \cite{s1, DevolderThesis}. Говорят, что функция $f$ допускает неточный оракул $(f_{\delta}(x),g_{\delta}(x))\in\mathbb{R} \times \mathbb{R}^n$ в произвольной запрошенной точке $x \in Q$, если для некоторых $\delta>0$ и $L >0$ выполняется аналог неравенства~\eqref{eq01}:
\begin{equation}\label{eq04}
f_{\delta}(x)+\abr{g_{\delta}(x),y-x}\leqslant f(y)\leqslant f_{\delta}(x)+\abr{g_{\delta}(x),y-x}+\frac{L}{2}\nbr{y-x}^2+\delta\,\,\,\forall x,y\in Q.
\end{equation}
По сути, \eqref{eq04} означает, что $f_{\delta}(x)$ --- приближенное значение $f(x)$ \mbox{($f_{\delta} (x) \leqslant f(x) \leqslant f_{\delta} (x) + \delta$)}, а $g_{\delta}(x)$~--- некоторый $\delta$-субградиент $f$ в произвольной точке $x$. Оказывается \cite{s1}, что при выполнении условия \eqref{eq04} для градиентного метода (с заменой пары ($f,\nabla f$) на ($f_{\delta},g_{\delta}$)) верна  оценка скорости сходимости
\begin{equation}\label{eq05}
f(\hat{x})-f^*\leqslant\frac{C_1}{N} + \delta,
\end{equation}
т.\,е. в оценке не происходит накопления величин, соответствующих погрешностям.

Идеология Деволдера --- Глинера --- Нестерова была развита в работе \cite{s2}, где обобщена концепция ($\delta, L$)-оракула и введено понятие ($\delta, L$)-модели целевой функции. Идея этого обобщения заключается в том, что линейная функция $\langle \nabla f(y), x - y \rangle$ в неравенстве \eqref{eq04} заменяется на некоторую абстрактную выпуклую функцию $\psi(x, y)$  \cite{s2}.
\begin{definition}
\label{gen_delta_L_oracle}
Говорят, что функция $f$ допускает $(\delta, L)$-модель $(f_{\delta}(x), \psi(y,x))$ в точке $x \in Q$, если для любого $y \in Q$ справедливо неравенство
\begin{gather}
\label{exitLDLOrig}
0 \leq f(y) - f_{\delta}(x) - \psi(y,x) \leq \frac{L}{2} \|y - x\|^2 + \delta,
\end{gather}
где $\psi_{\delta}(x,x)=0 \,\,\, \forall x \in Q$ и $\psi_{\delta}(x,y)$~--- выпуклая функция по $x$ для всякого $y \in Q$.
\end{definition}

Концепция из определения \ref{gen_delta_L_oracle} позволяет обосновать сходимость градиентного метода для достаточно широкого класса задач оптимизации \cite{s2, inexact_model_2019}. По сути, она дает возможность унифицировать подходы к различным на первый взгляд классам задач оптимизации с описанием степени влияния погрешностей данных на гарантированное качество решения, достижимое в ходе работы метода.

В настоящей работе предлагается модификация концепции ($\delta, L$)-модели целевой функции, которая учитывает возможность неточного задания не только значения целевой функции, но и самой функции-модели. В частности, для линейной модели $\psi(x, y) = \langle \nabla f(x), y - x \rangle$ описывается ситуация некоторой модификации условий \eqref{exitLDLOrig} с учетом отдельно погрешности задания $f$ и $\nabla f$. Если положить, что $\forall x\in Q$ справедливо
\begin{equation}\label{eq07}
\big\|\nabla f(x)-\widetilde{\nabla}f(x)\big\|\leqslant\Delta,\quad \Delta>0
\end{equation}
для некоторого доступного приближенного значения $\widetilde{\nabla}f(x)$ градиента $\nabla f$, то будет верно неравенство
$\big|\big\langle\nabla f(x)-\widetilde{\nabla}f(x),y-x\big\rangle\big|\leqslant\Delta\nbr{y-x},$
т.\,е. для всяких $x,y\in Q$
$$f(y)\leqslant f(x)+\big\langle\widetilde{\nabla}f(x),y-x\big\rangle+\frac{L}{2}\nbr{y-x}^2+\Delta\nbr{y-x},$$ а также $f(y)\geqslant f(x)+\big\langle\widetilde{\nabla}f(x),y-x\big\rangle-\Delta\nbr{y-x}.$

Если кроме этого учесть неравенство $f_{\delta}(x)\leqslant f(x)\leqslant f_{\delta}(x)+\delta$ при $\delta>0$,
то получим следующий аналог \eqref{eq04}:
$$
f_{\delta}(x)+\big\langle\widetilde{\nabla}f(x),y-x\big\rangle-\Delta\nbr{y-x}\leqslant f(y)$$
\begin{equation}\label{eq09}
\leqslant f_{\delta}(x)+\big\langle\widetilde{\nabla}f(x),y-x\big\rangle+\frac{L}{2}\nbr{y-x}^2+\Delta\nbr{y-x}+\delta\,\,\,\forall x,y\in Q,
\end{equation}
откуда $f_{\delta}(x) \leqslant f(x) \leqslant f_{\delta}(x) + \delta$.

В разд.~2 настоящей работы мы рассмотрим (в модельной общности подобно определению~\ref{gen_delta_L_oracle}) следующий аналог неравенства \eqref{eq09} c параметрами $\delta,\gamma, \Delta \geqslant 0$:
$$
f_{\delta}(x)+\big\langle\widetilde{\nabla}f(x),y-x\big\rangle-\gamma\nbr{y-x}\leqslant f(y)$$
\begin{equation}\label{eqconceptnew}
\leqslant f_{\delta}(x)+\big\langle\widetilde{\nabla}f(x),y-x\big\rangle+\frac{L}{2}\nbr{y-x}^2+\Delta\nbr{y-x}+\delta\,\,\,\forall x,y\in Q.
\end{equation}

Смысл такого обобщения заключается в возможности различных значений параметров $\gamma$ и $\Delta$ в \eqref{eqconceptnew}. Один из основных результатов работы --- обоснование потенциального уменьшения влияния $\Delta$ на оценку скорости сходимости метода. Отметим еще, что далее в разд.~3 подробно разобрано несколько примеров негладких задач, когда $\delta = \gamma = 0$ при $\Delta > 0$. Если положить $\gamma = 0$, то $\widetilde{\nabla}f(x)$ --- $\delta$-субградиент $f$ в точке $x$, и параметр $\Delta > 0$ может указывать в этом случае на скачки $\widetilde{\nabla}f(x)$ в точках негладкости $f$. Если положить $\delta = 0$, то при $\gamma > 0$ $\widetilde{\nabla}f(x)$ --- так называемый аналитический $\gamma$-субградиент $f$ \cite[Section~1.3]{Mordukh}. В итоге мы предлагаем максимально общую концепцию неточной модели целевой функции, которая могла бы охватить все указанные ситуации. Для функций, допускающих существование такой модели в любой запрошенной точке, мы предлагаем адаптивный градиентный метод (алгоритм~1) и доказываем теорему о скорости его сходимости (теорема \ref{th01}).

Неравенства \eqref{eq09} и \eqref{eqconceptnew} аналогичны \eqref{eq04}, но величины $\Delta\nbr{y-x}$ и $\gamma\nbr{y-x}$ уже зависят от выбора $x$ и $y$. Заменить их обе в \eqref{eqconceptnew} на постоянные величины, вообще говоря, возможно только в случае ограниченного допустимого множества задачи $Q$. Более того, хорошо известно, что при использовании $\widetilde{\nabla}f(x)$ из \eqref{eq07} метод может расходиться \cite[Section~4]{DevolderThesis}. Поэтому важно выделить класс задач, для которых можно получать приемлемые оценки скорости сходимости на неограниченных множествах. Это, в частности, мотивировало вторую часть основных результатов работы (разд.~4). Хорошо известно, что в случае сильной выпуклости целевого функционала оценки скорости сходимости градиентного метода могут существенно улучшаться. Например, для сильно выпуклого целевого функционала с липшицевым градиентом известно, что градиентный метод сходится с линейной скоростью. Весьма интересен и важен вопрос о том, насколько можно условие сильной выпуклости ослабить. В этом случае известен подход, основанный на использовании вместо сильной выпуклости условия градиентного доминирования Поляка --- Лоясиевича \cite{Polyak_1963} (см. также недавнюю работу \cite{Karimi} и имеющиеся там ссылки)
\begin{equation}\label{eq18}
f(x)-f(x_*)\leqslant\frac{1}{2\mu}\nbr{\nabla f(x)}^2 \quad \forall x \in Q,
\end{equation}
где $x_*$ --- точное решение задачи минимизации $f$, а $\mu>0$ --- некоторая постоянная и норма евклидова. Известно, что неравенство \eqref{eq18} в предположении липшицевости градиента с константой $L$ позволяет получить оценку скорости сходимости градиентного метода с постоянным шагом
\begin{equation}\label{1.16}
f(x^N)-f(x_*)\leqslant\br{1-\frac{\mu}{L}}^N\br{f(x^0)-f(x_*)}\leqslant\exp\br{-\frac{\mu}{L}N}\br{f(x^0)-f(x_*)}.
\end{equation}

В настоящей работе мы рассматриваем следующий ослабленный вариант условия $L$-липши-цевости градиента
\begin{equation}\label{PLSS1}
f(y)\leqslant f(x)+\big\langle\widetilde{\nabla}f(x),y-x\big\rangle + \frac{L}{2}\|y-x\|^{2}+\Delta\|y-x\|+\delta\quad \forall\,x,y\in Q
\end{equation}
для некоторых $\delta$ и $\Delta>0$. Например, это предположение естественно в случае, если значения функции $f$ немного отличаются от значений некоторой достаточно гладкой функции~$\widetilde{f}$, удовлетворяющей условию Липшица градиента (при этом $\widetilde{\nabla}f(x)$ --- некоторое возмущенное с точностью $\Delta$ значение градиента $\nabla f(x)$). По сути, в разд.~4  работы левая часть неравенства~\eqref{eqconceptnew} заменяется условием градиентного доминирования. Мы предлагаем метод с адаптивным подбором шага с настройкой на величины $L$, $\Delta$ и $\delta$ и показываем оценку скорости сходимости, аналогичную \eqref{1.16}. В частности, запуск предлагаемого метода (алгоритм~2) не предполагает знания никакой верхней оценки $L$ и может применяться для задач с неточным заданием градиента на неограниченных допустимых множествах. Более того, возможно использование данного подхода для некоторого класса негладких задач (см. определение \ref{Main_Def}).

Подытожим основные результаты (вклад) настоящей работы:

 --- В раз.~2 обобщено ранее предложенное в \cite{s2} понятие $(\delta,L)$-модели целевой функции в запрошенной точке и введена концепция ($\delta, \gamma, \Delta, L$)-модели функции (определение \ref{def1}). Предложен градиентный метод (алгоритм 1) для задач выпуклой минимизации с адаптивным выбором шага и адаптивной настройкой на некоторые из параметров ($\delta, \gamma, \Delta, L$)-модели, получена оценка качества решения в зависимости от номера итерации (теорема \ref{th01}).

--- В разд.~3 рассмотрен специальный класс задач выпуклой негладкой оптимизации, к которым применима концепция определения \ref{def1} ($\delta = \gamma = 0$, $\Delta>0$). Показано, что для таких задач возможно модифицировать алгоритм 1 так, чтобы гарантированно имела место сходимость по функции со скоростью $O(\varepsilon^{-2} \log_2 \varepsilon^{-1})$, которая близка к оптимальной на классе задач выпуклой негладкой оптимизации (теорема \ref{thm2ston}). При этом рассмотрены примеры негладких задач, для которых за счет адаптивности алгоритма~1 может наблюдаться существенно более высокая скорость сходимости.

--- В разд.~4 предложен адаптивный градиентный метод (алгоритм 2) для целевых функционалов с липшицевым градиентом (а также некоторой релаксацией этого условия), удовлетворяющих условию Поляка --- Лоясиевича. При этом учитывается возможность неточного задания градиента и предлагается адаптивная настройка работы метода на основные входные параметры. Обоснована линейная сходимость метода с точностью до величины, связанной с погрешностью (теорема \ref{thm3ston}).

\section{Концепция $(\delta,\gamma,\Delta,L)$-модели функции в запрошенной точке и оценка скорости сходимости для градиентного метода}\setcounter{equation}{0}

Введем анонсированный выше аналог понятия $(\delta,\gamma,\Delta,L)$-модели целевой функции, который учитывает погрешность $\Delta$ задания градиента и применим также для задач с относительно гладкими целевыми функционалами \cite{s3}.
\begin{definition}\label{def1}
Будем говорить, что $f$ допускает $(\delta,\gamma,\Delta,L)$-модель в точке $x \in Q$, если для некоторой выпуклой по первой переменной функции $\psi(y,x)$ такой, что $\psi(x,x)=0$ для произвольных $x,y\in Q$, будет верно неравенство
\begin{equation}\label{eq30}
\begin{split}
f_{\delta}(x)+\psi(y,x) - \gamma\nbr{y-x} \leqslant f(y) \leqslant f_{\delta}(x)+\psi(y,x)+\delta+\Delta\nbr{y-x}+LV(y,x).
\end{split}
\end{equation}
\end{definition}

Покажем пример, поясняющий смысл использования модельной общности в предыдущем определении.\smallskip

\begin{example}
Отметим задачу выпуклой композитной оптимизации $f(x)=g(x)+h(x)\rightarrow\min$, где $g$~--- гладкая выпуклая функция, а $h$~--- выпуклая необязательно гладкая функция простой структуры (операция проектирования на любое множество уровня $h$ несильно затратна). Если при этом для градиента $\nabla g$ задано его приближение $\widetilde{\nabla}g$:
$\big\|\widetilde{\nabla}g(x)-\nabla g(x)\big\|\leqslant\Delta,$ причем $g(y) \geqslant g(x) + \langle \widetilde{\nabla}g(x), y - x \rangle - \gamma \|y-x\|-\delta$, то можно положить $\psi(y,x) = \langle \widetilde{\nabla}g(x),y-x \rangle+h(y)-h(x)$,
и будет верно \eqref{eq30}. Композитная оптимизация весьма часто возникает во многих прикладных задачах (см. например \cite{s5}).\smallskip

\end{example}

Рассмотрим следующий метод для минимизации выпуклых функций, которые допускают существование $(\delta, \gamma, \Delta, L)$-модели во всякой точке $x \in Q$ и докажем результат о его скорости сходимости.\smallskip

\begin{algorithm}
\begin{algorithmic}\small
\caption{Адаптивный градиентный метод для выпуклых функций, допускающих $(\delta, \gamma, \Delta, L)$-модель в произвольной запрошенной точке.}
\REQUIRE $x^0$~--- начальная точка, $V(x_*,x^0)\leqslant R^2$,
параметры $\delta_0,\;L_0,\;\Delta_0$\\
($\delta_0\leqslant2\delta,\;L_0\leqslant2L,\;\Delta_0\leqslant2\Delta$).
\STATE $L_{k+1}:=\nicefrac{L_k}{2}$, $\Delta_{k+1}:=\nicefrac{\Delta_k}{2}$, $\delta_{k+1}:=\nicefrac{\delta_k}{2}$.
\STATE $x^{k+1}:=\text{arg}\min\limits_{x\in Q} \{ \psi(x,x^k)+L_{k+1}V(x,x^k) \}$.
\IF{$f_{\delta}(x^{k+1})\leqslant f_{\delta}(x^k)+\psi(x^{k+1},x^k)+L_{k+1}V(x^{k+1}, x^k)+\Delta_{k+1}\nbr{x^{k+1}-x^k}+\delta_{k+1}$}
\STATE $k:=k+1$ и выполнение п.~1.
\ELSE
\STATE $L_{k+1}:=2\cdot L_{k+1};\;\Delta_{k+1}:=2\cdot\Delta_{k+1};\;\delta_{k+1}:=2\cdot\delta_{k+1}$ и выполнение п.~2.
\ENDIF
\ENSURE $\widehat{x}:=\frac{1}{S_N}\sum\limits_{k=0}^{N-1}\frac{x^{k+1}}{L_{k+1}},\;S_N:=\sum\limits_{k=0}^{N-1}\frac{1}{L_{k+1}}$.
\end{algorithmic}
\end{algorithm}

Справедливо следующее утверждение.
\begin{theorem}\label{th01}
Пусть $f: Q \rightarrow\mathbb{R}$ --- выпуклая функция и для некоторой постоянной $R>0$ имеет место $V(x_*,x^0)\leqslant R^2$, где $x^0$~--- начальное приближение, а $x_*$~--- точное решение задачи минимизации $f$, ближайшее к $x^0$ с точки зрения расхождения Брэгмана. Тогда после $N$ итераций для выхода $\hat{x}$ алгоритма $1$ будет верно неравенство
\begin{equation}\label{eq10}
f(\hat{x})-f(x_*)\leqslant\frac{R^2}{S_N} +\frac{1}{S_N}\sum\limits_{k=0}^{N-1}\frac{\delta_k+\Delta_k\nbr{x^{k+1}-x^k}+\gamma \nbr{x^{k}-x_*}}{L^{k+1}} + \delta.
\end{equation}
При этом количество обращений к задаче п. $2$ листинга алгоритма $1$ не превышает
\begin{equation}\label{Oracle_Estim}
2N + \max\Big\{\log_2\frac{2L}{L_0}, \log_2\frac{2\delta}{\delta_0}, \log_2\frac{2\Delta}{\Delta_0}\Big\}.
\end{equation}
\end{theorem}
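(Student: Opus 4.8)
The plan is to follow the standard "mirror descent with inexact model" scheme, adapted to the new terms $\gamma\|y-x\|$ and $\Delta\|y-x\|$. First I would analyze a single successful iteration. By construction, $x^{k+1}$ minimizes $\psi(x,x^k)+L_{k+1}V(x,x^k)$ over $Q$; the crucial tool is the "three-point" (prox-lemma) inequality: for the minimizer $x^{k+1}$ of a function of the form $\psi(x,x^k)+L_{k+1}V(x,x^k)$ with $\psi(\cdot,x^k)$ convex, one has
\begin{equation*}
\psi(x^{k+1},x^k)+L_{k+1}V(x^{k+1},x^k)+L_{k+1}V(u,x^{k+1})\leqslant \psi(u,x^k)+L_{k+1}V(u,x^k)\qquad\forall u\in Q.
\end{equation*}
I would apply this with $u=x_*$. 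Combining it with the exit condition of the \textbf{if}-block (which holds precisely because the loop terminated) gives an upper bound on $f_\delta(x^{k+1})$ in terms of $\psi(x_*,x^k)$, $V(x_*,x^k)-V(x_*,x^{k+1})$ and the error terms $\delta_{k+1}+\Delta_{k+1}\|x^{k+1}-x^k\|$. Then I use the left inequality of the $(\delta,\gamma,\Delta,L)$-model \eqref{eq30} at the point $x^k$, namely $\psi(x_*,x^k)\leqslant f(x_*)-f_\delta(x^k)+\gamma\|x^k-x_*\|$, and the bound $f_\delta(x^{k+1})\geqslant f(x^{k+1})-\delta$, to convert the $\psi$-term into $f(x_*)-f(x^{k+1})$ plus the $\gamma\|x^k-x_*\|$ term. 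This yields, for each successful step,
\begin{equation*}
\frac{f(x^{k+1})-f(x_*)}{L_{k+1}}\leqslant V(x_*,x^k)-V(x_*,x^{k+1})+\frac{\delta_{k+1}+\Delta_{k+1}\|x^{k+1}-x^k\|+\gamma\|x^k-x_*\|}{L_{k+1}}+\frac{\delta_{k+1}+\delta}{L_{k+1}}\cdot(\text{const}),
\end{equation*}
up to the precise bookkeeping of the additive $\delta$'s (using $\delta_{k+1}\leqslant 2\delta$ and similar bounds on $L_k,\Delta_k$ from the \textbf{require} line).

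Next I would sum these inequalities over $k=0,\dots,N-1$. The $V(x_*,x^k)-V(x_*,x^{k+1})$ terms telescope to $V(x_*,x^0)-V(x_*,x^N)\leqslant R^2$. Dividing by $S_N=\sum_{k=0}^{N-1}1/L_{k+1}$ and using convexity of $f$ together with Jensen's inequality for $\hat x=\frac{1}{S_N}\sum_{k=0}^{N-1}x^{k+1}/L_{k+1}$ gives
\begin{equation*}
f(\hat x)-f(x_*)\leqslant \frac{1}{S_N}\sum_{k=0}^{N-1}\frac{f(x^{k+1})-f(x_*)}{L_{k+1}}\leqslant \frac{R^2}{S_N}+\frac{1}{S_N}\sum_{k=0}^{N-1}\frac{\delta_k+\Delta_k\|x^{k+1}-x^k\|+\gamma\|x^k-x_*\|}{L_{k+1}}+\delta,
\end{equation*}
where I absorb the doubling relations $\delta_{k+1}=\delta_k/2$, $\Delta_{k+1}=\Delta_k/2$ (so that the terms with index $k+1$ inside the sum can be rewritten with index $k$, at worst changing a constant) into the final statement \eqref{eq10}; the residual additive-$\delta$ bookkeeping collapses to the single $+\delta$ because $\sum \delta_{k+1}/L_{k+1}\cdot(\cdots)$ is controlled termwise and the $\delta$ coming from $f_\delta(x^{k+1})\geqslant f(x^{k+1})-\delta$ is, after division by $S_N$, exactly $\delta$ times $S_N/S_N$.

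For the oracle-complexity bound \eqref{Oracle_Estim}: each iteration that is eventually accepted begins with one halving of $(L_k,\Delta_k,\delta_k)$ and then performs some number of doublings inside the \textbf{else}-branch; each doubling is one extra call to the subproblem in step~2. Since the model \eqref{eq30} guarantees the exit condition is satisfied as soon as $L_{k+1}\geqslant L$, $\Delta_{k+1}\geqslant\Delta$, $\delta_{k+1}\geqslant\delta$, the parameters never need to exceed $2L,2\Delta,2\delta$ respectively. A standard amortization argument — the total number of doublings over all $N$ iterations cannot exceed $N$ (one halving per accepted step) plus the net increase of $\log_2$ of the largest parameter relative to its starting value — gives the total number of step-2 calls as $2N$ plus $\max\{\log_2(2L/L_0),\log_2(2\delta/\delta_0),\log_2(2\Delta/\Delta_0)\}$.

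The main obstacle I anticipate is the careful bookkeeping of the three separate error parameters $\delta,\gamma,\Delta$ through the prox-lemma step and the exit condition: one must check that the $\gamma\|x^k-x_*\|$ term genuinely attaches to $x^k$ (the previous iterate, via the left inequality of \eqref{eq30}) while $\Delta_{k+1}\|x^{k+1}-x^k\|$ attaches to the step length (via the right inequality / exit condition), and that the additive constants involving $\delta$ and $\delta_{k+1}$ really telescope down to a clean $+\delta$ rather than accumulating a factor growing with $N$. Verifying the prox-lemma in the relatively-smooth (merely convex, not strongly convex $d$) setting, where $V(\cdot,\cdot)$ need not dominate $\tfrac12\|\cdot\|^2$, is the one place requiring slight care, but it follows from the first-order optimality condition for $x^{k+1}$ exactly as in \cite{s3}.
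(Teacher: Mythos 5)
Ваш план по существу совпадает с доказательством в статье: та же трёхточечная лемма (лемма 1 из \cite{s2}) для $x^{k+1}$, комбинация с критерием выхода из итерации, левая часть неравенства \eqref{eq30} в точке $x_*$ для появления слагаемого $\gamma\|x^k-x_*\|$, телескопирование $V(x_*,x^k)-V(x_*,x^{k+1})$ с неравенством Йенсена, и тот же амортизационный подсчёт числа решений вспомогательной задачи через лог-отношение конечного и начального параметров. Предложение корректно и не отличается от авторского рассуждения.
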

\begin{proof}
1. Согласно лемме 1 из \cite{s2} после завершения $k$-й итерации ($k = 0, 1, 2,\ldots$) алгоритма 1 будут верны неравенства
$$
\psi(x^{k+1},x^k) \leqslant \psi(x,x^k) + L_{k+1}V(x, x^k) - L_{k+1}V(x, x^{k+1}) - L_{k+1}V(x^{k+1}, x^k),
$$
$$
f_{\delta}(x^{k+1})\leqslant f_{\delta}(x^k)+\psi(x^{k+1},x^k)+L_{k+1}V(x, x^k)-L_{k+1}V(x, x^{k+1})+\Delta_{k+1}\big\|x^{k+1}-x^k\big\|+\delta_{k+1}.
$$
Поэтому ввиду того, что $f_{\delta}(x) \leq f(x) \leq f_{\delta}(x) + \delta$ при всяком $x \in Q$ имеем
$$
f(x^{k+1})\leqslant f(x^k)+\psi(x,x^k)+L_{k+1}V(x, x^k) - L_{k+1}V(x, x^{k+1})+\Delta_{k+1}\big\|x^{k+1}-x^k\big\|+\delta_{k+1} + \delta.
$$
Далее, с учетом левой части неравенства \eqref{eq30} при $x= x_*$ получим
$$
f(x^{k+1}) - f(x_*) \leqslant L_{k+1}V(x_*, x^k) - L_{k+1}V(x_*, x^{k+1})+\Delta_{k+1}\big\|x^{k+1}-x^k\big\|+\delta_{k+1} + \delta + \gamma \big\|x^{k}-x_*\big\|,
$$
откуда после суммирования по $k = 0, 1, \ldots, N-1$ ввиду выпуклости $f$ имеем
$$
f(\hat{x})-f(x_*)\leqslant \frac{1}{S_N} \sum\limits_{k = 0}^{N - 1} \frac{f(x^{k+1})}{L_{k+1}} - f(x_*) \leqslant V(x_*, x^0)
$$
$$
{ } + \frac{1}{S_N} \sum\limits_{k = 0}^{N - 1} L_{k+1}^{-1}\Big(\Delta_{k+1}\big\|x^{k+1}-x^k\big\|+\delta_{k+1}+\gamma \big\|x^{k}-x_*\big\|\Big) + \delta.
$$

2. Проверим оценку \eqref{Oracle_Estim}. Пусть на $(k+1)$-й итерации ($k = 0, 1, \ldots, N-1$) алгоритма 1 вспомогательная задача решается $i_{k+1}$ раз. Тогда
$$
2^{i_{k+1} - 2} = \frac{L_{k+1}}{L_k} = \frac{\delta_{k+1}}{\delta_k} = \frac{\Delta_{k+1}}{\Delta_k},
$$
поскольку в начале каждой итерации параметры $L_k, \delta_k, \Delta_k$ делятся на 2. Поэтому
$$
\sum\limits_{k = 0}^{N-1} i_{k+1} = 2N + \log_2\frac{L_N}{L_0}, \quad \log_2\frac{L_N}{L_0} = \log_2\frac{\delta_N}{\delta_0} = \log_2\frac{\Delta_N}{\Delta_0}.
$$
Ясно, что верно хотя бы одно из неравенств $L_N \leqslant 2L$, $\delta_N \leqslant 2\delta$ и $\Delta_N \leqslant 2\Delta$, что и обосновывает оценку \eqref{Oracle_Estim}.
\qed
\end{proof}
\begin{remark}
Оценка \eqref{Oracle_Estim} показывает, что в среднем трудоемкость итерации предложенного адаптивного алгоритма превышает трудоемкость аналогичного неадаптивного метода с постоянным шагом не более, чем в постоянное число раз. Отметим также, что при $k = 0,1,2,\ldots$ верно $L_{k+1} \leqslant 2CL$, где $C = \max\Big\{1, \displaystyle\frac{2\delta}{\delta_0},\displaystyle\frac{2\Delta}{\Delta_0}\Big\}$. Поэтому $S_N \leqslant \displaystyle\frac{N}{2CL}$, что указывает на скорость сходимости метода $O(\varepsilon^{-1})$, но при наличии в оценке \eqref{eq10} слагаемых, определяемых параметрами $\delta, \gamma, \Delta$ (при этом ввиду адаптивности метода $\delta_k$ и $\Delta_k$ могут быть меньше $\delta$ и $\Delta$ соответственно). Можно доказать, что эта величина ограничена в случае ограниченного допустимого множества задачи $Q$, что вполне может считаться оптимальным \cite{Polyak_1983}.
\end{remark}
\begin{remark}
Если дополнительно предположить, что в произвольной точке $x \in Q$ верно $f_{\delta}(x) = f(x)$, то в оценке \eqref{eq10} можно считать $\delta = 0$. В таком случае оценка \eqref{eq10} полностью адаптивна по параметрам $L, \Delta$ и $\delta$.
\end{remark}
\begin{remark}
Отметим, что ввиду адаптивности алгоритма 1  полученная в теореме~\ref{th01} оценка скорости сходимости может быть применена даже в случаях $L = + \infty$ или $\Delta = + \infty$. Если не происходит зацикливания и каждый раз выполняется критерий выхода из итерации, то алгоритм 1 применим и в этом случае. Пример, когда такое возможно ($\Delta = + \infty$), приведен в следующем разделе (задача 2).
\end{remark}

\section{О применимости метода к одному классу негладких задач за счет введения искусственных неточностей}\setcounter{equation}{0}

Отметим, что на величину $\Delta$ в \eqref{eq07} можно смотреть как на искусственную неточность, описывающую степень негладкости функционала $f$. Точнее говоря, $\Delta$ можно понимать, например, как верхнюю оценку суммы диаметров субдифференциалов $f$ в точках негладкости вдоль всевозможных векторных отрезков $[x; y]$ из области определения $f$. В \cite{Ston_new} введен следующий класс негладких выпуклых функционалов.
\begin{definition}\label{Main_Def}
Будем говорить, что выпуклый функционал $f\colon Q\rightarrow\mathbb{R}\;(Q\subset\mathbb{R}^n)$ имеет $(\delta,L)$-липшицев субградиент ($f\in C_{L,\hat{\Delta}}^{1,1}(Q)$), если для некоторых $\delta>0$ и $L >0 $

{\rm (i)} для произвольных $x,y\in Q$ выпуклый функционал $f$ дифференцируем во всех точках множества $\{y_t\}_{0\leq t\leq1}$ ($y_t = (1-t)x+ty$), за исключением последовательности (возможно, конечной)
\begin{equation}\label{eq4}
\{y_{t_j}\}_{j=1}^{\infty}:\;t_1<t_2<t_3<\ldots\text{ и }\lim_{j\rightarrow\infty}t_j=1;
\end{equation}

{\rm (ii)} для последовательности точек из \eqref{eq4} существуют конечные субдифференциалы в смысле выпуклого анализа $\{\partial f(y_{t_j})\}_{j=1}^{\infty}$ и
\begin{equation}\label{eq5}
{\rm diam}\;\partial f(y_{t_j})=:\hat{\Delta}_j>0,\ \ \text{ где }\sum_{j=1}^{+\infty}\hat{\Delta}_j=:\hat{\Delta}<+\infty;
\end{equation}

 {\rm (iii)} для произвольных $x,y\in Q$ при условии, что $y_t\in Q\setminus Q_0$ при всяком $t\in(0,1)$ (то есть существует градиент $\nabla f(y_k)$) для некоторой фиксированной константы $L>0$, не зависящей от выбора $x$ и $y$, выполняется неравенство
\begin{equation}\label{eq6}
\min_{\substack{\nabla f(x)\in\partial f(x),\; \nabla f(y)\in \partial f(y)}}\|\nabla f(x)- \nabla f(y)\|_*\leqslant L\|x-y\|.
\end{equation}
\end{definition}

В \cite{Ston_new}, в частности, показано, что всякий функционал $f \in C_{L,\delta}^{1,1}(Q)$ удовлетворяет для произвольного субградиента $\nabla f(x) \in \partial f(x)$ неравенству
\begin{equation}\label{IneqNonsmooth}
  f(y) \leqslant f(x) + \langle \nabla f(x), y - x \rangle + \frac{L}{2}\|y - x\|^2 + 2\hat{\Delta} \|y - x\| \quad \forall y \in Q.
\end{equation}

C другой стороны, ввиду выпуклости $f$ будет верно $f(y) \geqslant f(x) + \langle \nabla f(x), y - x \rangle$.
Поэтому всякая функция $f\in C_{L,\hat{\Delta}}^{1,1}(Q)$ удовлетворяет определению \ref{def1} при $\psi(y, x) = \langle \nabla f(x), y - x \rangle$ с параметрами $\delta = \gamma = 0$ и $\Delta = 2\hat{\Delta}$.

Оказывается, что экспериментально можно получать существенно лучшую скорость сходимости метода, чем по отмеченной выше оценке. Приведем некоторые примеры. Начнем с примера бесконечного числа точек негладкости (недифференцируемости) целевого функционала, но с конечной величиной $\Delta$.
\begin{task}[{\rm Аналог задачи Ферма --- Торричелли --- Штейнера}]\label{task2}
Для заданных шаров~$\omega_k$ с центрами $a_k=(a_{1k},a_{2k},\ldots,a_{nk})$ (координаты точек $a_k$ выбираются случайно так, чтобы $1<\sqrt{a_{1k}^2+a_{2k}^2+\ldots+a_{nk}^2}<1.5$, $k=\overline{1,m}$, $m=10$) и единичными радиусами в $n$-мерном евклидовом пространстве $\mathbb{R}^n$ ($n=10^5$) необходимо найти такую точку $x=(x_1,x_2,\ldots,x_n)$, чтобы целевая функция
$f(x):=\sum_{k=1}^m d(x,\omega_k) $ принимала наименьшее значение на множестве точек единичного шара с центром в нуле,
где $d(x,\omega_k)=\|x - a_k\|-1$, если $\|x - a_k\|>1$, и $0$  в противном случае (здесь $\|x - a_k\|=\sqrt{(x_1-a_{1k})^2+(x_2-a_{2k})^2+\ldots+(x_n-a_{nk})^2}$).
\end{task}

В таблице выше  для задачи 1 приведены усредненные результаты 10 экспериментов со случайным подбором координат точек для указанного количества итераций. Как видим, скорость сходимости метода близка к $O(\varepsilon^{-1})$. Это свойственно для неускоренных градиентных методов
на классе задач оптимизации выпуклых функций с липшицевым градиентом (так называемых гладких задач). Однако рассматриваемая задача негладкая, поскольку точки недифференцируемости~$f$ лежат в области определения (в единичном шаре с центром в нуле). Для задач минимизации выпуклых липшицевых функций, как известно, оптимальная оценка скорости сходимости (суб)градиентных методов --- $O(\varepsilon^{-2})$ \cite{NemYud_1979}. Оценку $O(\varepsilon^{-1})$ можно объяснить адаптивностью предложенного метода.

Рассмотрим еще пример, где довольно много точек негладкости. В частности, все точки некоторого векторного отрезка могут быть точками негладкости, и условие \eqref{IneqNonsmooth} выполнено лишь для бесконечного значения $\Delta$.
\begin{task}[\rm{Задача о наименьшем покрывающем шаре}]\label{task3}
Для заданных точек $$a_k=(a_{1k},a_{2k},\ldots,a_{nk})$$ найти евклидов шар наименьшего радиуса, в котором лежат эти точки. Координаты точек $a_k$ выбираются случайно так, что $0.5<\sqrt{a_{1k}^2+a_{2k}^2+\ldots+a_{nk}^2}<1,\;k=\overline{1,10},$ в $n$-мерном евклидовом пространстве $\mathbb{R}^n$ (размерность $n=10^5$) необходимо найти такую точку $x=(x_1,x_2,\ldots,x_n)$, чтобы целевая функция
$$f(x):=\max\limits_{k=\overline{1,m}} \|x - a_k\|=\max\limits_{k=\overline{1,m}}\sqrt{(x_1-a_{1k})^2+(x_2-a_{2k})^2+\ldots+(x_n-a_{nk})^2}$$
принимала наименьшее значение. Мы рассматриваем задачу на единичном шаре с центром в нуле.
\end{task}

В таблице выше для задачи 2 приведены усредненные результаты 10 экспериментов со случайным подбором координат точек для определенного количества итераций. Как видим, скорость сходимости метода снова близка к $O(\varepsilon^{-1})$.
\begin{table}
\centering
\small
\caption{Результаты численных экспериментов.}

\begin{tabular}{|c|c|c|c|c|c|c|c|c|c|c|}
\hline
         & \multicolumn{5}{c|}{Задача \ref{task2}}   & \multicolumn{5}{c|}{Задача \ref{task3}} \\ \hline
Итерации & 200    & 400    & 600    & 800   & 1000   & 200    & 400    & 600   & 800   & 1000  \\ \hline
Оценка   & 0.0232 & 0.0117 & 0.0079 & 0.006 & 0.0048 & 0.79   & 0.44   & 0.31  & 0.24  & 0.2   \\ \hline
Время, с & 27     & 54     & 82     & 110   & 136    & 15     & 29     & 44    & 58    & 72    \\ \hline
\end{tabular}
\end{table}

Приведенные результаты экспериментов указывают на потенциально неплохую эффективность предложенной адаптивной процедуры регулировки шага в методе. Отметим, что все вычисления были произведены с помощью программного обеспечения CPython 3.7 на компьютере с 3-ядерным процессором AMD Athlon II X3 450 с тактовой частотой 3,2 ГГц на каждое ядро. ОЗУ компьютера составляло 8 Гб.

Однако можно в некотором смысле и теоретически показать оптимальность предложенной схемы для рассматриваемых негладких задач. Оказывается, в случае известной величины \mbox{$\Delta< + \infty$} возможно несколько модифицировать алгоритм 1, обеспечив уменьшение\linebreak $\Delta_k\|x^{k+1}-x^k\|$ в \eqref{eq10} до любой заданной величины. Это позволит показать оптимальность данного метода в теории нижних оракульных оценок \cite{NemYud_1979} с точностью до логарифмического множителя.

Покажем, как это возможно сделать. Предположим, что на (k+1)-й итерации алгоритма~1 ($k=0,1,\ldots,N-1$) верно неравенство $L\leqslant L_{k+1}\leqslant2L$ (как показано в п.~2 доказательства теоремы \ref{th01}, этого можно всегда добиться выполнением не более чем постоянного числа операций п.~2 листинга алгоритма 1). Для каждой итерации алгоритма 1 ($k=0,1,\ldots,N-1$) предложим такую процедуру
\begin{equation}\label{eq12}
\fbox{\begin{minipage}{25em}
Повторяем операции п.~2 $p$ раз, увеличивая $L_{k+1}$ в два раза при неизменной $\Delta_{k+1}\leqslant2\Delta$.
\end{minipage}}
\end{equation}

Процедуру \eqref{eq12} остановим в случае выполнения одного из неравенств
\begin{equation}\label{eq13}
\Delta_{k+1}\big\|x^{k+1}-x^k\big\|\leqslant\frac{\varepsilon}{2}
\end{equation}
или
\begin{equation}\label{eq14}
 f(x^{k+1})\leqslant f(x^k)+\big\langle\widetilde{\nabla}f(x^k),x^{k+1}-x^k\big\rangle +2^{p-1}L\big\|x^{k+1}-x^k\big\|^2.
\end{equation}

Отметим, что здесь мы полагаем $f$ точно заданной, т.\,е. $f_{\delta}=f$ ($\delta=0$); $\widetilde{\nabla}f$~--- некоторый субградиент $f$. Процедура \eqref{eq12} предполагает на ($k+1$)-й итерации ($k=0,1,2,\ldots,N-1$) обновления $x^{k+1}$ (при сохранении $x^k$). Оценим количество повторений $p$ шага п.~2 листинга алгоритма 1, необходимое для достижения альтернативы \eqref{eq13}, \eqref{eq14}. Для всяких $x^k, x^{k+1}\in Q$ по предположению верно неравенство
$$f(x^{k+1})\leqslant f(x^k)+\big\langle\widetilde{\nabla}f(x^k),x^{k+1}-x^k\big\rangle+\frac{L}{2}\big\|x^{k+1}-x^k\big\|^2+\Delta\big\|x^{k+1}-x^k\big\|,$$
причем $\Delta_{k+1}\leqslant2\Delta$. Если не выполнено \eqref{eq13}, то $\big\|x^{k+1}-x^k\big\|>\displaystyle\frac{\varepsilon}{4\Delta}$ и \eqref{eq14} заведомо верно при
\begin{equation}\label{eq15}
2^p>1+\frac{16\Delta^2}{\varepsilon L},
\end{equation}
поскольку тогда ввиду \eqref{eq13}
$$\frac{2^p-1}{2}L\big\|x^{k+1}-x^k\big\|^2>2\Delta\big\|x^{k+1}-x^k\big\|.$$

Итак, после повторения $p$ процедур ($p$ удовлетворяет \eqref{eq15}) типа \eqref{eq12} на каждой из $N$ итераций алгоритма 1 неравенство \eqref{eq10} примет вид
$$f(\hat{x})-f^*\leqslant\frac{R^2}{S_N}+\frac{\varepsilon}{2},\  \text{ где }\  S_N=\sum\limits_{k=0}^{N-1}\frac{1}{L_{k+1}}\geqslant\frac{N}{2^{p+1}L}.$$
Поэтому
$\displaystyle\frac{R^2}{S_N}\leqslant\displaystyle\frac{2^{p+1}LR^2}{N}\leqslant\displaystyle\frac{\varepsilon}{2}$
в случае $N\geqslant\displaystyle\frac{2^{p+1}LR^2}{2}$. С учетом \eqref{eq15} получаем оценку
\begin{equation}\label{eq16}
N\geqslant\frac{4LR^2}{\varepsilon}+\frac{64\Delta^2R^2}{\varepsilon^2}.
\end{equation}
При этом \eqref{eq15} означает, что на каждой итерации потребуется не более чем $p = \log_2\Big(1+\displaystyle\frac{16\Delta^2}{\varepsilon L}\Big)$ шагов типа п.~2 листинга алгоритма 1  (т.\,е. операций проектирования) для стандартной модели $\psi(y,x) = \langle \nabla f(x), y -x \rangle $. Итак, верна
\indent\begin{theorem}\label{thm2ston}
Пусть функция $f$ удовлетворяет определению $\ref{def1}$ при $\psi(y, x) = \langle \nabla f(x), y - x \rangle$ с параметрами $\delta = \gamma = 0$ и $\Delta > 0$. Тогда в обозначениях теоремы $\ref{th01}$ для выхода $\hat{x}$ модифицированного алгоритма $1$ c учетом дополнительной процедуры $\eqref{eq12}$ неравенство $f(\hat{x})-f^*\leqslant\varepsilon$ будет гарантированно выполнено не более чем после
\begin{equation}\label{eq16fin}
\cebr{\Big(\frac{4LR^2}{\varepsilon}+\frac{64\Delta^2R^2}{\varepsilon^2}\Big)\log_2 \Big(1+\frac{16\Delta^2}{\varepsilon L} \Big)}
\end{equation}
вычислений субградиента $f$.
\end{theorem}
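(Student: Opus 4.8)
The plan is to carry out, in a streamlined way, the argument already outlined before the statement: view the modified procedure as a sequence of $N$ outer iterations of Algorithm~1, each of which performs the doubling loop \eqref{eq12} until the alternative \eqref{eq13}/\eqref{eq14} is met, and then substitute the resulting step-size control into the convergence estimate \eqref{eq10} of Theorem~\ref{th01}. First I would fix $\varepsilon>0$ and invoke part~2 of the proof of Theorem~\ref{th01}: after a constant (in $N$) number of extra calls to step~2 we may assume $L\leqslant L_{k+1}\leqslant 2L$ at the start of iteration $k+1$, which is the configuration from which \eqref{eq12} is launched, with $\Delta_{k+1}\leqslant 2\Delta$ kept fixed throughout the loop.

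Second, I would bound the number $p$ of repetitions of step~2 needed on one outer iteration. Since $f$ admits the $(\delta,\gamma,\Delta,L)$-model with $\psi(y,x)=\langle\nabla f(x),y-x\rangle$, $\delta=\gamma=0$, the right inequality in \eqref{eq30} gives $f(x^{k+1})\leqslant f(x^k)+\langle\widetilde{\nabla}f(x^k),x^{k+1}-x^k\rangle+\tfrac{L}{2}\|x^{k+1}-x^k\|^2+\Delta\|x^{k+1}-x^k\|$. If \eqref{eq13} fails, then from $\Delta_{k+1}\leqslant 2\Delta$ we get $\|x^{k+1}-x^k\|>\varepsilon/(4\Delta)$; feeding this lower bound in shows that $\tfrac{2^p-1}{2}L\|x^{k+1}-x^k\|^2>2\Delta\|x^{k+1}-x^k\|$ as soon as $2^p>1+16\Delta^2/(\varepsilon L)$, so the inflated quadratic term $2^{p-1}L\|x^{k+1}-x^k\|^2$ absorbs $\tfrac{L}{2}\|x^{k+1}-x^k\|^2+\Delta\|x^{k+1}-x^k\|$ and \eqref{eq14} holds. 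Hence $p=\log_2\!\big(1+16\Delta^2/(\varepsilon L)\big)$ repetitions always suffice to reach the dichotomy, and after them $L\leqslant L_{k+1}\leqslant 2^{p+1}L$.

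Third, I would feed this back into \eqref{eq10} with $\delta=\gamma=0$. On every outer iteration either \eqref{eq13} holds, whence $\Delta_{k+1}\|x^{k+1}-x^k\|\leqslant\varepsilon/2$, or \eqref{eq14} holds, and then — because $2^{p-1}L\|x^{k+1}-x^k\|^2\leqslant L_{k+1}V(x^{k+1},x^k)$ — the descent condition of Algorithm~1 is satisfied with the $\Delta_{k+1}$-term deleted; in both cases the $k$-th summand of \eqref{eq10} is at most $\tfrac{\varepsilon}{2}L_{k+1}^{-1}$. Summing and dividing by $S_N$ turns \eqref{eq10} into $f(\hat x)-f^*\leqslant R^2/S_N+\varepsilon/2$, while $L_{k+1}\leqslant 2^{p+1}L$ gives $S_N\geqslant N/(2^{p+1}L)$, so $R^2/S_N\leqslant 2^{p+1}LR^2/N\leqslant\varepsilon/2$ once $N\geqslant 2^{p+2}LR^2/\varepsilon$. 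With $2^p=1+16\Delta^2/(\varepsilon L)$ this is exactly $N\geqslant\tfrac{4LR^2}{\varepsilon}+\tfrac{64\Delta^2R^2}{\varepsilon^2}$, i.e.\ \eqref{eq16}. Taking $N=\big\lceil\tfrac{4LR^2}{\varepsilon}+\tfrac{64\Delta^2R^2}{\varepsilon^2}\big\rceil$ outer iterations, each costing at most $p$ extra evaluations, yields the total bound $Np$, which after one outer ceiling is precisely \eqref{eq16fin}.

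I expect the only genuinely delicate point to be the second step: verifying that the dichotomy \eqref{eq13}/\eqref{eq14} is reached within $p$ doublings. This is where the $(\delta,\gamma,\Delta,L)$-model hypothesis enters essentially, and one must check that the lower bound $\|x^{k+1}-x^k\|>\varepsilon/(4\Delta)$ coming from the negation of \eqref{eq13} is strong enough for the term $2^{p-1}L\|x^{k+1}-x^k\|^2$ to dominate $\tfrac{L}{2}\|x^{k+1}-x^k\|^2+\Delta\|x^{k+1}-x^k\|$ under \eqref{eq15}; once that inequality is in hand, the remainder is bookkeeping — carrying the two-case bound through the sum in \eqref{eq10} and matching the constants with \eqref{eq16fin}.
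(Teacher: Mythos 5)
Ваше доказательство корректно и по существу совпадает с рассуждением статьи: та же процедура удвоения $L_{k+1}$ при фиксированном $\Delta_{k+1}\leqslant 2\Delta$, та же оценка $\|x^{k+1}-x^k\|>\varepsilon/(4\Delta)$ при нарушении \eqref{eq13}, то же условие \eqref{eq15} на $p$ и та же подстановка в \eqref{eq10}, дающая $N\geqslant 4LR^2/\varepsilon+64\Delta^2R^2/\varepsilon^2$ и итог $Np$. Единственное отличие --- вы чуть аккуратнее статьи обосновываете, почему в случае выполнения \eqref{eq14} слагаемое с $\Delta_{k+1}$ в \eqref{eq10} можно опустить (через $2^{p-1}L\|x^{k+1}-x^k\|^2\leqslant L_{k+1}V(x^{k+1},x^k)$), и попутно исправляете опечатку статьи в пороге $N\geqslant 2^{p+2}LR^2/\varepsilon$.
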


Таким образом доказано, что для рассмотренного класса негладких задач приемлемое качество решения можно достичь за $O\br{\varepsilon^{-2}\log_2 \varepsilon^{-1}}$ вычислений субградиента $f$, что близко к оптимальной оценке c точностью до логарифмического множителя. Отметим, что примеры сходимости метода со скоростью $O(\varepsilon^{-1})$ для некоторых выпуклых негладких задач наблюдались и для так называемого универсального градиентного метода \cite{Nesterov_2015} с другой концепцией искусственной неточности. Однако для негладких задач с липшицевым целевым функционалом в \cite{Nesterov_2015} доказана оценка скорости сходимости вида $O\br{M_f \varepsilon^{-2}}$, зависящая еще от константы Липшица целевого функционала $M_f$. Полученная нами оценка \eqref{eq16fin} может быть лучше при малом $\Delta>0$ (в этом случае оценка \eqref{eq16fin} близка к $O\br{\varepsilon^{-1}}$).

\section{Метод для минимизации функций, удовлетворяющих условию градиентного доминирования при неточном задании целевой функции и градиента}\setcounter{equation}{0}

Теперь предложим подход к задаче минимизации, вообще говоря, невыпуклых функций с неточно заданным градиентом. При этом метод предполагает адаптивную настройку на некоторые параметры, в том числе связанные с величиной погрешности задания градиента. Пусть рассматривается задача минимизации функции на всем пространстве $f:\mathbb{R}^n\rightarrow\mathbb{R}$, для которой (всюду далее полагаем норму $\|\cdot\|$ евклидовой)
\begin{itemize}
\item[(i)] существует $x_*\in \mathbb{R}^n$ такое, что
\begin{equation}\label{eq17}
f(x_*)=\min\limits_{x\in \mathbb{R}^n} f(x)=:f^*;
\end{equation}
\item[(ii)] выполнено условие Поляка --- Лоясиевича \eqref{eq18} (или $(PL)$-условие);
\item[(iii)] для некоторых постоянных $L>0$ и $\Delta >0$ верно неравенство
\begin{equation}\label{eq21}
f(y)\leqslant f(x)+\abr{\nabla f(x),y-x}+\frac{L\nbr{y-x}^2}{2}\ \ \forall x,y\in \mathbb{R}^n.
\end{equation}
\end{itemize}

Если предположить, что в каждой точке $x\in \mathbb{R}^n$ доступно приближенное значение $\widetilde{\nabla}f(x)$ градиента $\nabla f(x)\colon \big\|\widetilde{\nabla}f(x)-\nabla f(x)\big\|\leqslant\Delta\;\;\forall x\in \mathbb{R}^n
$ при некотором фиксированном $\Delta>0$, то \eqref{eq21} верно c  заменой градиента $\nabla f(x)$ на $\widetilde{\nabla}f(x)$.
Далее для удобства будем обозначать $g_x:=\nbr{\nabla f(x)}$ и $\widetilde{g}_x:=\|\widetilde{\nabla}f(x)\|$. К задаче \eqref{eq17} будем применять градиентный метод вида
\begin{equation}\label{eq22}
x^{k+1}=x^k-h_k\widetilde{\nabla}f(x^k),
\end{equation}
$k=0,1,2,\ldots$ и $h_k>0$. При этом $h_k$ выберем так, чтобы
\begin{equation}\label{eq23}
f(x^{k+1})\leqslant f(x^k)+\big\langle\widetilde{\nabla}f(x^k),x^{k+1}-x^k\big\rangle+\frac{L\big\|x^{k+1}-x^k\big\|^2}{2}+\Delta_{k+1}\big\|x^{k+1}-x^k\big\|,
\end{equation}
где $\Delta_{k+1}>0$~--- адаптивно подбираемая величина. В начале каждой итерации $\Delta_{k+1}:=\frac{\Delta_k}{2}$, а далее $\Delta_{k+1}$ ($k=0,1,2,\ldots$) увеличивается в два раза, и процедура \eqref{eq22} повторяется до тех пор, пока не выполняется \eqref{eq23}.

Ясно, что \eqref{eq23} заведомо верно при $\Delta_{k+1}\geqslant\Delta$. Поэтому аналогично п.~2) доказательства теоремы \ref{th01} проверяется, что за конечное число таких шагов \eqref{eq23} будет выполнено на любой итерации ($k=0,1,2,\ldots$), после чего $$f(x^{k+1})-f(x^k)\leqslant\varphi(h_k), \text{  где  } \varphi(h)=-h\widetilde{g}_{x^k}^2+\displaystyle\frac{Lh^2}{2}\widetilde{g}_{x^k}^2+\Delta_{k+1}h\widetilde{g}_{x^k}.
$$
Выберем шаг $h_k$ так, чтобы минимизировать величину $\varphi(h_k)$, т.\,е. $\varphi'(h_k)=0$, и
\begin{equation}\label{eq24}
h_k=\frac{1}{L}-\frac{\Delta_{k+1}}{L\widetilde{g}_{x^k}}.
\end{equation}

В таком случае \eqref{eq23} означает, что
\begin{equation}\label{eq25}
f(x^{k+1})-f(x^k)\leqslant-\frac{1}{2L}\br{\widetilde{g}_{x^k}-\Delta_{k+1}}^2\leqslant-\frac{1}{2L}\br{\frac{\widetilde{g}_{x^k}-\Delta_{k+1}}{\widetilde{g}_{x^k}+\Delta}}g_{x^k}^2,
\end{equation}
поскольку
$\mbr{\widetilde{g}_{x^k} - g_{x^k}}\leqslant\big\|\widetilde{\nabla}f(x^k)-\nabla f(x^k)\big\|\leqslant\Delta$ и $\widetilde{g}_{x^k}+\Delta\geqslant g_{x^k}$. Неравенство \eqref{eq25} означает, что для произвольного $k=0,1,2,\ldots$
$$f(x^k)-f(x^{k+1})\geqslant\frac{1}{2L}\br{\frac{\widetilde{g}_{x^k}-\Delta_{k+1}}{\widetilde{g}_{x^k}+\Delta}}^2g_{x^k}^2\geqslant
\frac{\mu}{L}\br{\frac{\widetilde{g}_{x^k}-\Delta_{k+1}}{\widetilde{g}_{x^k}+\Delta}}^2\br{f(x^k)-f^*}$$
ввиду $(PL)$-условия \eqref{eq18}. Поэтому
$$f(x^{k+1})-f(x_*)\leqslant\br{1-\frac{\mu}{L}\br{\frac{\widetilde{g}_{x^k}-\Delta_{k+1}}{\widetilde{g}_{x^k}+\Delta}}^2}\br{f(x^k)-f(x_*)},$$
откуда
\begin{equation}\label{eq26}
f(x^{k+1})-f^*\leqslant\prod_{i=0}^k\br{1-\frac{\mu}{L}\br{\frac{\widetilde{g}_{x^i}-\Delta_{i+1}}{\widetilde{g}_{x^i}+\Delta}}^2}\br{f(x^0)-f^*}.
\end{equation}
Можно считать, что $\mu\leqslant L$, и ввиду $\widetilde{g}_{x^i}-\Delta_{i+1}<\widetilde{g}_{x^i}+\Delta$ в \eqref{eq26} справа входит произведение $k+1$ числа, каждое из которых меньше 1. Адаптивность подбора $\Delta_{k+1}\leqslant2\Delta$ на каждой итерации может привести к увеличению дроби $\displaystyle\frac{\widetilde{g}_{x^k}-\Delta_{k+1}}{\widetilde{g}_{x^k}+\Delta}$ и уменьшению множителей в \eqref{eq26}, что потенциально улучшает оценку по сравнению с неадаптивным вариантом
\begin{equation}\label{eq27}
f(x^{k+1})-f^*\leqslant\prod_{i=0}^k\br{1-\frac{\mu}{L}\br{\frac{\widetilde{g}_{x^i}-\Delta}{\widetilde{g}_{x^i}+\Delta}}^2}\br{f(x^0)-f^*}.
\end{equation}

Аналогичную оценку можно предложить, также используя следующий метод с адаптивным выбором не только погрешности на итерациях, но и величины $L$ (см. алгоритм~2).

\begin{algorithm}
\begin{algorithmic}\small
\caption{Адаптивный градиентный метод для функций, удовлетворяющих (PL)-условию.}
\REQUIRE $x^0$~--- начальная точка, параметры $\Delta_0,\;L_0$\\
($2\mu \leqslant L_0 < 2L,\;\Delta_0\leqslant2\Delta$).
\STATE $L_{k+1}:= \nicefrac{L_k}{2},\;\Delta_{k+1}:=\nicefrac{\Delta_k}{2}$.
\STATE $x^{k+1}=x^k-h_k\widetilde{\nabla}f(x^k),$
$$ h_k=\frac{1}{L_{k+1}}-\frac{\Delta_{k+1}}{L_{k+1}\widetilde{g}_{x^k}},\;\widetilde{g}_{x^k}=\big\|\widetilde{\nabla}f(x^k)\big\|.$$
\REPEAT
\IF{$f(x^{k+1})\leqslant f(x^k)+\abr{\widetilde{\nabla}f(x^k),x^{k+1}-x^k}+\displaystyle \frac{L_{k+1}}{2}\big\|x^{k+1}-x^k\big\|^2+\Delta_{k+1}\big\|x^{k+1}-x^k\big\|$}
\STATE $k:=k+1$ и выполнение п.~1.
\ELSE
\STATE $L_{k+1}:=2\cdot L_{k+1};\;\Delta_{k+1}:=2\cdot\Delta_{k+1}$ и выполнение п.~2.
\ENDIF
\UNTIL{$k>=N$}
\ENSURE $x^{k+1}$.
\end{algorithmic}
\end{algorithm}

Для данного алгоритма будет верна оценка \eqref{eq27new}, обоснование которой аналогично \eqref{eq27}. Более того, можно показать, что либо невязка $\min\limits_kf(x^k)-f^*$ убывает со скоростью геометрической прогрессии при увеличении $k$ (см.~\eqref{eq28}), либо она ограничена величиной $\Delta$ (см.~\eqref{eq29}). Справедливо следующее утверждение.\pagebreak
\begin{theorem}\label{thm3ston}
После $k$ итераций алгоритма $2$ будет выполняться следующее неравенство:
\begin{equation}\label{eq27new}
f(x^{k+1})-f^*\leqslant\prod_{i=0}^k\br{1-\frac{\mu}{L_{k+1}}\br{\frac{\widetilde{g}_{x^i}-\Delta_{i+1}}{\widetilde{g}_{x^i}+\Delta}}^2}\br{f(x^0)-f^*}.
\end{equation}
Более того, если дополнительно потребовать для алгоритма $2$ $\Delta_{k+1}=\min\{\Delta_{k+1},\Delta\}$, то для всякого $C>1$ будет выполняться одно из двух неравенств
\begin{equation}\label{eq28}
f(x^{k+1})-f^*\leqslant\Big(1-\frac{\mu}{L}\Big(\frac{C-1}{C+1}\Big)^2\Big)^{k+1}\br{f(x^0)-f^*}
\end{equation}
или
\begin{equation}\label{eq29}
\min\limits_{i=\overline{1,k+1}}f(x^i)-f^*<\frac{(C+1)^2\Delta^2}{2\mu}.
\end{equation}
\end{theorem}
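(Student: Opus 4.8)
Доказательство разбивается на две части. Неравенство \eqref{eq27new} получается дословным повторением выкладок \eqref{eq22}--\eqref{eq27}, применённых к алгоритму~2: адаптивный подбор $L_{k+1}$ и $\Delta_{k+1}$ гарантирует (в силу того же рассуждения, что и в п.~2 доказательства теоремы~\ref{th01}), что после конечного числа удвоений параметров критерий \eqref{eq23} с $L$, заменённым на $L_{k+1}$, выполняется, при этом $L_{k+1}\leqslant 2L$ и $\Delta_{k+1}\leqslant 2\Delta$; далее выбор шага \eqref{eq24} с $L_{k+1}$ вместо $L$ приводит к убыванию \eqref{eq25}, а применение $(PL)$-условия \eqref{eq18} и перемножение по $i=0,\dots,k$ даёт \eqref{eq27new}. Единственная тонкость здесь --- корректность шага $h_k$: нужно, чтобы $h_k>0$, то есть $\widetilde{g}_{x^k}>\Delta_{k+1}$; если это не так, соответствующий множитель в произведении всё равно не превосходит единицы (так как $\widetilde g_{x^i}-\Delta_{i+1}<\widetilde g_{x^i}+\Delta$, а $\mu\leqslant L_{k+1}$), и оценка сохраняется.

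Для второй части фиксируем $C>1$ и вводим условие $\Delta_{k+1}=\min\{\Delta_{k+1},\Delta\}\leqslant\Delta$. Рассмотрим альтернативу на каждой итерации $i$: либо выполнено $\widetilde{g}_{x^i}\geqslant C\Delta$, либо $\widetilde{g}_{x^i}<C\Delta$.

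\emph{Случай 1: на всех итерациях $i=0,\dots,k$ верно $\widetilde{g}_{x^i}\geqslant C\Delta$.} Тогда, поскольку $\Delta_{i+1}\leqslant\Delta$, имеем
$$
\frac{\widetilde{g}_{x^i}-\Delta_{i+1}}{\widetilde{g}_{x^i}+\Delta}\geqslant\frac{\widetilde{g}_{x^i}-\Delta}{\widetilde{g}_{x^i}+\Delta},
$$
а функция $t\mapsto (t-\Delta)/(t+\Delta)$ возрастает по $t$, так что при $\widetilde g_{x^i}\geqslant C\Delta$ эта дробь не меньше $(C-1)/(C+1)$. Вместе с $L_{k+1}\leqslant L$ каждый множитель в \eqref{eq27new} оценивается сверху величиной $1-\frac{\mu}{L}\bigl(\frac{C-1}{C+1}\bigr)^2$, что и даёт \eqref{eq28}.

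\emph{Случай 2: нашлась итерация $i_0\in\{0,\dots,k\}$ с $\widetilde{g}_{x^{i_0}}<C\Delta$.} Тогда $g_{x^{i_0}}\leqslant\widetilde{g}_{x^{i_0}}+\Delta<(C+1)\Delta$, и по $(PL)$-условию \eqref{eq18}
$$
f(x^{i_0})-f^*\leqslant\frac{1}{2\mu}g_{x^{i_0}}^2<\frac{(C+1)^2\Delta^2}{2\mu},
$$
откуда $\min_{i=\overline{1,k+1}}f(x^i)-f^*\leqslant f(x^{i_0})-f^*<\frac{(C+1)^2\Delta^2}{2\mu}$ (здесь использовано, что последовательность $f(x^i)$ монотонно не возрастает в силу \eqref{eq25}, так что при $i_0=0$ значение $f(x^0)$ тоже учтено минимумом по $i\geqslant 1$; при $i_0\geqslant 1$ всё очевидно), то есть \eqref{eq29}.

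Основным препятствием я ожидаю аккуратное отслеживание того, что адаптивные параметры действительно остаются в нужных границах ($L_{k+1}\leqslant 2L$ в первой части, $L_{k+1}\leqslant L$ во второй, $\Delta_{k+1}\leqslant\Delta$ после навязанной срезки) и что при этом шаг \eqref{eq24} корректно определён; сама дихотомия во второй части, а также монотонность убывания $f(x^i)$ --- технически лёгкие, но их нужно явно сформулировать, чтобы вывод \eqref{eq29} был строгим.
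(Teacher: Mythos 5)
Ваше доказательство по существу совпадает с авторским: первая часть — то же повторение выкладок \eqref{eq22}--\eqref{eq26} с адаптивными параметрами, вторая — та же дихотомия «$\widetilde{g}_{x^i}\geqslant C\Delta$ на всех итерациях» против «$\widetilde{g}_{x^i}<C\Delta$ для некоторого $i$» с применением $(PL)$-условия \eqref{eq18}. Вы даже аккуратнее автора в деталях (монотонность $f(x^i)$ для случая $i_0=0$, корректность шага $h_k$), а оставшаяся у вас неотслеженной тонкость — переход от $\mu/L_{i+1}$ к $\mu/L$ в \eqref{eq28} — присутствует в том же виде и в оригинальном доказательстве.
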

\begin{proof}
Отметим лишь, что при произвольном $ k \geqslant 0$ верно
$$\frac{\widetilde{g}_{x^k}-\Delta_{k+1}}{\widetilde{g}_{x^k}+\Delta}\geqslant\frac{\widetilde{g}_{x^k}-\Delta}{\widetilde{g}_{x^k}+\Delta}=1-\frac{2\Delta}{\widetilde{g}_{x^k}+\Delta}.$$
Пусть $\widetilde{g}_{x^k}\geqslant C\Delta$ для некоторой постоянной $C>1$. Тогда $1-\displaystyle\frac{2\Delta}{\widetilde{g}_{x^k}+\Delta}\geqslant1-\displaystyle\frac{2}{C+1}=\displaystyle\frac{C-1}{C+1}>0$ и \eqref{eq26} принимает вид \eqref{eq28}. Если же для некоторого $k$ верно $\widetilde{g}_{x^k}<C\Delta$, то $g_{x^k}<C\Delta+\Delta=\Delta(C+1)$, и \eqref{eq29} верно в силу $(PL)$-условия \eqref{eq18}.
\qed
\end{proof}
\begin{remark}\label{PLRemInexact}
Можно рассматривать вместо \eqref{eq22} более слабое условие
\begin{equation}\label{SS1}
f_{\delta}(y)\leqslant f_{\delta}(x)+\langle\widetilde{\nabla}f(x),y-x\rangle + \frac{L}{2}\|y-x\|^{2}+\Delta\|y-x\|+\delta\quad \forall\,x,y\in Q
\end{equation}
для некоторого $\delta>0$ и приближения $f_{\delta}$: $f_{\delta} (x) \leq f(x) \leq f_{\delta} + \delta$. Например, это актуально в случае, если значения $f$ немного отличаются от значений некоторой достаточно гладкой функции $\widetilde{f}$, удовлетворяющей \eqref{eq22} (при этом $\widetilde{\nabla}f(x)$ --- некоторое возмущенное с точностью $\Delta$ значение градиента $\nabla\widetilde{f}(x)$). Тогда рассмотрим метод \eqref{eq23}--\eqref{eq25} с видоизмененным критерием выхода из итерации \eqref{eq24}
\begin{equation}\label{SS1}
f_{\delta}(x^{k+1})\leqslant f_{\delta}(x^{k})+\langle\widetilde{\nabla}f(x^{k}),x^{k+1}-x^{k}\rangle + \frac{L_{k+1}\|x^{k+1}-x^{k}\|^{2}}{2}+\Delta_{k+1}\|x^{k+1}-x^{k}\|+\delta_{k+1},
\end{equation}
который предполагает адаптивный подбор величин $\Delta_{k+1}$ и $\delta_{k+1}$ при заданных изначально $L_{0}\leqslant 2L$, $\Delta_{0}\leqslant\Delta$ и $\delta_{0}\leqslant2\delta$. Тогда на каждой итерации вместо неравенства \eqref{eq25} будет верно
$$f(x^{k})-f(x^{k+1})+\delta_{k+1}+\delta \geqslant f_{\delta}(x^{k})-f_{\delta}(x^{k+1})+\delta_{k+1}\geqslant\frac{\mu}{L_{k+1}}\Big(\frac{\widetilde{g}_{x^{k}}-\Delta_{k+1}}{\widetilde{g}_{x^{k}}+\Delta}\Big)^{2}\big(f(x^{k})-f^{*}\big),$$
откуда аналогично \eqref{eq26} имеем
$$f(x^{k+1})-f^{*}\leqslant\Big(1-\frac{\mu}{L_{k+1}}\Big(\frac{\widetilde{g}_{x^{k}}-\Delta_{k+1}}{\widetilde{g}_{x^{k}}+\Delta}\Big)^{2}\Big)\big(f(x^{k})-f^{*}\big)+\delta_{k+1}+\delta$$
$$\leqslant\Big(1-\frac{\mu}{L_{k+1}}\Big(\frac{\widetilde{g}_{x^{k}}-\Delta_{k+1}}{\widetilde{g}_{x^{k}}+\Delta}\Big)^{2}\Big)\Big(1-\frac{\mu}{L_{k}}\Big(\frac{\widetilde{g}_{x^{k-1}}-\Delta_{k}}{\widetilde{g}_{x^{k-1}}+\Delta}\Big)^{2}\Big)\big(f(x^{k-1})-f^{*}\big)$$
$${ } +\  (\delta_{k}+\delta)\Big(1-\frac{\mu}{L_{k+1}}\Big(\frac{\widetilde{g}_{x^{k-1}}-\Delta_{k}}{\widetilde{g}_{x^{k-1}}+\Delta}\Big)^{2}\Big)+\delta+\delta_{k+1}\leqslant\ldots$$
$$\leqslant\prod^{k}_{i=0}\Big(1-\frac{\mu}{L_{i+1}}\Big(\frac{\widetilde{g}_{x^{i}}-\Delta_{i+1}}{\widetilde{g}_{x^{i}}+\Delta}\Big)^{2}\Big)\big(f(x^{0})-f^{*}\big)+$$
$$+\sum_{i=0}^{k-1}(\delta+\delta_{i+1})\prod^{k}_{j=i}\Big(1-\frac{\mu}{L_{j+1}}\Big(\frac{\widetilde{g}_{x^{j}}-\Delta_{j+1}}{\widetilde{g}_{x^{j}}+\Delta}\Big)^{2}\Big)+\delta_{k+1}+\delta.$$

Полученная оценка выглядит несколько громоздко. Конкретизируя ее при постоянном $L_{i+1} = L$, $\Delta=0$ и $\delta_{i+1}\leqslant2\delta$ ($i\geqslant0$), получаем
$$f(x^{k+1})-f^{*}\leqslant\Big(1-\frac{\mu}{L}\Big)^{k+1}\big(f(x^{0})-f^{*}\big)+\sum_{i=0}^{k}(\delta+\delta_{i+1})\Big(1-\frac{\mu}{L}\Big)^{k-i}$$
$$\leqslant\Big(1-\frac{\mu}{L}\Big)^{k+1}\big(f(x^{0})-f^{*}\big)+3\delta\sum_{i=0}^{k}\Big(1-\frac{\mu}{L}\Big)^{k-i}=\Big(1-\frac{\mu}{L}\Big)^{k+1}\big(f(x^{0})-f^{*}\big)+\frac{3\delta L}{\mu}.$$

Данное неравенство приводит к таким выводам. С одной стороны мы видим, что величина, связанная с погрешностью $\delta$, ограничена. Однако она может быть довольно немалой при большом значении числа обусловленности $\displaystyle\frac{L}{\mu}$. Это показывает также, что замена слагаемого $\Delta\|y-x\|$ в \eqref{SS1} на $\displaystyle\frac{\Delta^2 + \|y-x\|^2}{2}$ может привести к ухудшению оценки качества решения при достаточно большом $\displaystyle\frac{L}{\mu}$.
\end{remark}
\begin{remark}
Аналогично второй части доказательства теоремы \ref{th01} можно проверить, что трудоемкость итерации адаптивного алгоритма 2 сопоставима с трудоемкостью аналогичного неадаптивного метода.
\end{remark}
\begin{remark}
Предложенные в этом разделе подходы можно применять и для некоторых задач негладкой оптимизации (см. предыдущий раздел и определение \ref{Main_Def}), для которых целевая функция удовлетворяет (PL)-условию (в частности, $\mu$-сильно выпукла).
Если определяющий степень негладкости функции параметр $\Delta$ достаточно мал, то найденные оценки \eqref{eq26}--\eqref{eq29} (см. также замечание \ref{PLRemInexact}) позволяют сделать вывод о близкой к линейной скорости сходимости.
\end{remark}

\section*{Заключение}

В настоящей работе рассмотрены некоторые подходы к концепции неточной модели целевой функции в оптимизации, которые учитывают как погрешность задания целевого функционала, так и погрешность задания градиента. Предложены методы с адаптивным выбором шага, а также адаптивной настройкой величины в оценке скорости сходимости, которая определяется упомянутыми погрешностями.

Cравнивая алгоритмы 1 и 2, между отметим следующее. Преимущества алгоритма 1 (и его модификации из третьего раздела статьи) состоят в максимальной общности (метод можно использовать для широкого класса задач выпуклой оптимизации \cite{s2, inexact_model_2019}, в том числе с условиями относительной гладкости \cite{s3}). Также, в отличие от алгоритма 2, для работы алгоритма 1 и использования найденной оценки скорости сходимости нет необходимости знать $\Delta$ (оценку неточности задания градиента). Как преимущества алгоритма 2 для функций, удовлетворяющих ($PL$)-условию, можно упомянуть близкую к линейной скорость сходимости и возможность использования метода на неограниченном допустимом множестве. Однако для оценок \eqref{eq26}--\eqref{eq29} необходимо знать верхнюю оценку величины $\Delta$. Также существенно использована безусловность поставленной задачи. Оценка для алгоритма 1 в свою очередь проигрывает алгоритму 2 возможностью сколь угодно большого влияния погрешности градиента при $\gamma>0$ для неограниченной области $Q$. Хорошо известно, что ($PL$)-условие заведомо верно в случае $\mu$-сильной выпуклости целевой функции $f$. Однако довольно хорошо известны примеры, когда нельзя быть уверенным даже в выпуклости $f(x)$, но ($PL$)-условие имеет место (см., например, разд.~4.3 из диссертации \cite{Nesterov_2013}). Это означает, что алгоритм 2 применим и для некоторых задач невыпуклой оптимизации. Интересно, что все рассмотренные методы применимы к некоторому классу задач негладкой оптимизации (см. определение \ref{Main_Def}).

В качестве актуальной задачи на будущее можно было бы выделить проблему построения так называемых ускоренных методов для рассмотренных классов задач. В частности, к ускоренным методам относят самые разные вариации так называемого быстрого градиентного метода (БГМ) (см., например, \cite{Necoara_2019, s1, Nesterov_2013}). Для задач выпуклой гладкой оптимизации без погрешностей БГМ гарантирует лучшую оценку скорости сходимости по сравнению с \eqref{eq02}. Известно также, что в сильно выпуклом случае использование ускоренных методов позволяет уменьшить знаменатель геометрической прогрессии, которая описывает скорость сходимости. Более того, неадаптивные ускоренные методы для релаксаций условия сильной выпуклости исследовались в \cite{Necoara_2019}. Однако стоит отметить, что при наличии погрешностей ситуация становится уже менее тривиальной: в отличие от обычного градиентного метода возможно их накопление в итоговой оценке \cite{DevolderThesis}, либо же необходимо использовать довольно ограничительные условия на величины таких погрешностей \cite{Artemont}. Также пока не удалось предложить ускоренный метод, который применим в общем случае для относительно гладких задач \cite{s3}. Представляется интересной задача исследования применимости результатов настоящей работы для приближенного решения бесконечномерных задач, в частности, для некоторых типов линейных и нелинейных операторных уравнений.\smallskip

Автор благодарит Александра Владимировича Гасникова, а также рецензента за полезные обсуждения и замечания.

\end{document}